\newtheorem{theorem}{Theorem}
\newtheorem{corollary}{Corollary}
\theoremstyle{definition}
\newcommand{\PP}{\mathcal{P}}
\newcommand{\UU}{\mathcal{U}}
\newcommand{\V}{\mathcal{V}}
\begin{document}

\title{Selective and Ramsey ultrafilters on $G$-spaces }
\author{O.V. Petrenko, I.V. Protasov}
\date{}                                                                                                  

\maketitle

\begin{abstract}
Let $G$ be a group, $X$ be an infinite transitive $G$-space. A free ultrafilter $\UU$ on $X$ is called $G$-selective if, for any $G$-invariant partition $\PP$ of $X$, either one cell of $\PP$ is a member of $\UU$, or there is a member of $\UU$ which meets each cell of $\PP$ in at most one point. We show (Theorem 1) that in ZFC with no additional set-theoretical assumptions there exists a $G$-selective ultrafilter on $X$, describe all $G$-spaces $X$ (Theorem 2) such that each free ultrafilter on $X$ is $G$-selective, and prove (Theorem 3) that a free ultrafilter $\UU$ on $\omega$ is selective if and only if $\UU$ is $G$-selective with respect to the action of any countable group $G$ of permutations of $\omega$.

A free ultrafilter $\UU$ on $X$ is called $G$-Ramsey if, for any $G$-invariant coloring $\chi:[G]^2 \to \{0,1\}$, there is $U\in \UU$ such that $[U]^2$ is $\chi$-monochrome. By Theorem 4, each $G$-Ramsey ultrafilter on $X$ is $G$-selective. Theorems 5 and 6 give us a plenty of $\mathbb{Z}$-selective ultrafilters on $\mathbb{Z}$ (as a regular $\mathbb{Z}$-space) but not $\mathbb{Z}$-Ramsey. We conjecture that each $\mathbb{Z}$-Ramsey ultrafilter is selective.

\

{\bf 2010 AMS Classification}: 05D10, 54H15

\

{\bf Keywords}: $G$-space, $G$-selective and $G$-Ramsey ultrafilters, the Stone-\v{C}ech compactification

\end{abstract}

A free ultrafilter $\UU$ on an infinite set $X$ is said to be {\it selective} if, for any partition $\PP$ of $X$, either one cell of $\PP$ is a member of $\UU$, or some member of $\UU$ meets each cell of $\PP$ in at most one point. The selective ultrafilters on $\omega = \{0,1,\ldots\}$ are also known under the name {\it Ramsey ultrafilters} (see, for example \cite{b1}) because $\UU$ is selective if and only if, for each coloring $\chi:[\omega]^2 \to \{0,1\}$ of $2$-element subsets of $\omega$, there exists $U\in \UU$ such that the restriction $\chi|_{[U]^2}\equiv const$.

Let $G$ be a group, $X$ be a $G$-space with the action $G\times X\to X$, $(g,x) \mapsto gx$. All $G$-spaces under consideration are supposed to be transitive: for any $x,y \in X$, there exists $g\in G$ such that $gx = y$. If $G=X$ and $gx$ is the product of $g$ and $x$ in $G$, $X$ is called a {\it regular $G$-space}. A partition $\PP$ of a $G$-space $X$ is {\it $G$-invariant} if $gP\in \PP$ for all $g\in G$, $P\in \PP$.

Now let $X$ be an infinite $G$-space. We say that a free ultrafilter $\UU$ on $X$ is {\it $G$-selective} if, for any $G$-invariant partition $\PP$ of $X$, either some cell of $\PP$ is a member of $\UU$, or there exists $U\in \UU$ such that $|P\cap U|\leqslant 1$ for each $P\in\PP$. Clearly, each selective ultrafilter on $X$ is $G$-selective.

The selective ultrafilters on $\omega$ exist under some additional to ZFC set-theoretical assumptions (say, the continuum hypothesis CH), but there are models of ZFC with no selective ultrafilters (see \cite{b1}). In contrast to these facts, we show (Theorem 1) that a $G$-selective ultrafilter exists on any infinite $G$-space $X$. Then we characterize (Theorem 2) all $G$-spaces $X$ such that each free ultrafilter on $X$ is $G$-selective, and show (Theorem 3) that a free ultrafilter $\UU$ on $\omega$ is $G$-selective for any transitive group $G$ of permutations on $\omega$ if and only if $\UU$ is selective.

For a $G$-space $X$ and $n\geqslant 2$, a coloring  $\chi:[X]^n \to \{0,1\}$ is said to be {\it $G$-invariant} if, for any $\{x_1, \ldots, x_n\} \in [X]^n$ and $g\in G$, $\chi(\{x_1, \ldots, x_n\})=\chi(\{gx_1, \ldots, gx_n\})$. We say that a free ultrafilter $\UU$ on $X$ is {\it $(G,n)$-Ramsey} if, for every $G$-invariant coloring $\chi:[X]^n \to \{0,1\}$, there exists $U\in \UU$ such that $\chi|_{[U]^n}\equiv const$. In the case $n=2$, we write "$G$-Ramsey" instead of "$(G,2)$-Ramsey".

We show (Theorem 4) that every $G$-Ramsey ultrafilter is $G$-selective, but the converse statement is very far from truth. Theorems 5 and 6 give us a plenty ultrafilters on $\mathbb{Z}$ (as a regular $\mathbb{Z}$-space, $\mathbb{Z}$ is the group of integers) which are not $\mathbb{Z}$-Ramsey, while each free ultrafilter on $\mathbb{Z}$ is $\mathbb{Z}$-selective. Moreover, we conjecture, that each $\mathbb{Z}$-Ramsey ultrafilter on $\mathbb{Z}$ is selective. By Corollary 5, each $(\mathbb{Z},4)$-Ramsey ultrafilter is selective.

A $B$-Ramsey ultrafilter on the countable Boolean group $B = \oplus_{\omega}\mathbb{Z}_2$ needs not to be selective, but a $B$-Ramsey ultrafilter cannot be constructed in ZFC without additional assumptions.

\section{Selective ultrafilters}

Let $X$ be a $G$-space, $x_0\in X$. We put $St(x_0) = \{y\in G: gx_0 = x_0\}$ and identify $X$ with the left coset space $G/St(x_0)$ of $G$ by $St(x_0)$. If $\PP$ is a $G$-invariant partition of $X = G/S, S = St(x_0)$, we take $P_0\in \PP$ such that $S\in P_0$, put $H = \{g\in G: gS \in P_0\}$ and note that the subgroup $H$ completely determines $\PP$: $xS, yS \in G/S$ are in the same cell of $\PP$ if and only if $xy^{-1}\in H$. Thus, $\PP = \{x(H/S): x\in L\}$ where $L$ is a set of representatives of the left cosets of $G$ by $H$.

\begin{theorem}
For every infinite $G$-space $X$, there exists a $G$-selective ultrafilter $\UU$ on $X$.
\end{theorem}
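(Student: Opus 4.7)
Using the parametrization in the paper's preamble, $G$-invariant partitions of $X\cong G/S$ correspond bijectively to subgroups $H$ with $S\leq H\leq G$; write $\PP_H$ for the partition. When $[G:H]<\infty$, the partition $\PP_H$ has finitely many cells and any free ultrafilter on $X$ contains one of them by pigeonhole. The substantive task therefore concerns $\Omega=\{H:S\leq H\leq G,\ [G:H]=\infty\}$. Because the conditions ``a cell of $\PP_H$ is in $\UU$'' and ``a selector of $\PP_H$ is in $\UU$'' both persist under filter extension, it suffices to construct a free filter $\mathcal{F}$ on $X$ that, for every $H\in\Omega$, contains either a cell or a selector of $\PP_H$; any free ultrafilter extending $\mathcal{F}$ is then $G$-selective.

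\textbf{Inductive construction.} I enumerate $\Omega=\{H_\alpha:\alpha<\kappa\}$ and build an increasing chain of free filters $(\mathcal{F}_\alpha)_{\alpha\leq\kappa}$, starting from the Fr\'echet filter $\mathcal{F}_0$. At a successor stage $\alpha+1$ I apply a dichotomy: either some cell $P$ of $\PP_{H_\alpha}$ has the finite intersection property with $\mathcal{F}_\alpha$ (Case A), in which case I adjoin $P$; or no cell is FIP-compatible with $\mathcal{F}_\alpha$ (Case B), equivalently the push-forward $\pi_{H_\alpha}(\mathcal{F}_\alpha)$ on $G/H_\alpha$ is non-principal, and I adjoin an FIP-compatible selector $T$ for $\PP_{H_\alpha}$. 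Unions are taken at limit stages, and $\mathcal{F}_\kappa$ is finally extended to a free ultrafilter via Zorn. The selector $T$ in Case B is built by enumerating a filter base $\{F_\beta:\beta<\lambda\}$ of $\mathcal{F}_\alpha$ and choosing $t_\beta$ inductively in $F_\beta\cap\bigcap_{\gamma<\beta}F_\gamma$ so that $\pi_{H_\alpha}(t_\beta)$ avoids the cosets $\pi_{H_\alpha}(t_\gamma)$ used at earlier stages; then $T=\{t_\beta\}$ is a selector for $\PP_{H_\alpha}$ meeting every element of the base.

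\textbf{Main obstacle.} The existence of $t_\beta$ at each stage requires that $\pi_{H_\alpha}\bigl(F_\beta\cap\bigcap_{\gamma<\beta}F_\gamma\bigr)$ have cardinality strictly exceeding the number of cosets $\{\pi_{H_\alpha}(t_\gamma):\gamma<\beta\}$ already consumed. For a free filter with non-principal push-forward this is not automatic: on $\omega\times\omega$, the filter generated jointly by the complements of all rows and by the complements of all total selectors of the rows partition is free and admits neither a compatible cell nor a compatible selector. So the proof must exploit the restricted shape of $\mathcal{F}_\alpha$--namely, that its generators are only cofinite sets together with cells and selectors chosen at earlier stages--to keep the cardinalities of filter bases in check throughout the induction. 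Carrying out this bookkeeping, together with the pseudo-intersection issues at limit stages, is the principal technical content of the theorem; the remaining verification that the resulting $\UU$ is $G$-selective is then immediate from its construction.
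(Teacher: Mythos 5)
There is a genuine gap, and you have in fact flagged it yourself: the entire content of the theorem is concentrated in the ``bookkeeping'' you defer. Your plan is a transfinite recursion over all intermediate subgroups $H$ with $S\leq H\leq G$ and $[G:H]=\infty$, adjoining at each stage either a cell or a selector of $\PP_H$. Two things go wrong. First, the index set $\Omega$ can have cardinality up to $2^{|G|}$, so the recursion runs for far more stages than $|X|$; by some stage the filter $\mathcal{F}_\alpha$ has more generators than $X$ has points, and your selector construction --- which picks $t_\beta$ inside $F_\beta\cap\bigcap_{\gamma<\beta}F_\gamma$ --- needs pseudo-intersections of long chains of filter sets. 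This is exactly the diagonalization that makes selective ultrafilters on $\omega$ exist under CH but not in ZFC; since the theorem is claimed in ZFC, any argument of this shape must break, and you give no mechanism by which the ``restricted shape'' of the generators rescues it. Second, even your Case B dichotomy is not well-posed as stated: as your own $\omega\times\omega$ example shows, a free filter whose push-forward to $G/H$ is non-principal need not admit an FIP-compatible selector, and you do not verify that the filters arising in your induction avoid this pathology.

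The paper's proof avoids recursion over $\Omega$ entirely by analyzing the lattice of subgroups between $S$ and $G$ and choosing, once and for all, a small family of sets whose presence in an ultrafilter forces $G$-selectivity for \emph{every} invariant partition simultaneously. If there is a $T\supset S$ with $[T:S]=\infty$ that is minimal in the sense that every intermediate $F$ with $S\subseteq F\subset T$ has $[F:S]<\infty$, one takes any free ultrafilter containing $T/S$: for a partition determined by $H$, either $T\subseteq H$ and the cell $H/S$ is in $\UU$, or $\PP$ restricted to $T/S$ is a partition into cosets of $(T\cap H)/S$, i.e.\ into cells of uniformly bounded finite size $[T\cap H:S]$, and \emph{every} free ultrafilter on $T/S$ contains a selector of such a partition (color each cell by position and take the color class in $\UU$). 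If no such $T$ exists, a maximal chain of infinite-index-over-$S$ subgroups plays the same role. The lesson is that $G$-invariance collapses the problem to the subgroup lattice, where a single dichotomy replaces your stage-by-stage treatment and no cardinality or pseudo-intersection issues ever arise; your proposal identifies the right parametrization but does not find this reduction, and without it the argument cannot be completed in ZFC.
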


\begin{proof}
Let $S$ be a subgroup of $G$, $X = G/S$.We consider two cases.

Case 1. There exists a subgroup $T$ of $G$ such that $S\subset T$, $|T:S|=\infty$ and $|F:S|< \infty$ for each subgroup $F$, $S\subseteq F \subset T$. We take an arbitrary free ultrafilter $\UU$ on $G/S$ such that $T/S\subseteq \UU$. To show that $\UU$ is $G$-selective, we take an arbitrary $G$-invariant partition $\PP$ of $G/S$ and choose a subgroup $H$, $S\subseteq H$ which determines $\PP$. If $T\subseteq H$ then $H/S\in \UU$. Otherwise, we put $F = T\cap S$ and note that $|F:S|< \infty$. Decompose $T/S$ into left cosets by $F/S$, note that each coset has $|F/S|$ elements and choose $U\in \UU$ such that $H/S \subseteq \UU$ and $U$ meets each coset in at most one element. Then $|U\cap P|\leqslant 1$ for each cell $P$ of the partition $\PP$.

Case 2. For each subgroup $T$, $S\subset T$, there exists a subgroup $T'$ such that $S\subset T' \subset T$ and $|T':S|=\infty$. We choose maximal linearly ordered by $\subseteq$ family $\mathcal{F}$ of subgroups of $G$ such that, for each $T\in \mathcal{F}$, $S\subset T$ and $|T:S|=\infty$. We put $F = \cap\mathcal{F}$ and note that $|F:H|<\infty$. Then we take an arbitrary ultrafilter $\UU$ on $G/S$ such that $\{T/S: T\in \mathcal{F}\}\subseteq \UU$. Let $P$ be a $G$-invariant partition of $G/S$ determined by some subgroup $H$, $S\subseteq H$. If $H\in \mathcal{F}$ then $H/S\in \UU$. Otherwise, by maximality of $\mathcal{F}$, there exists $T\in \mathcal{F}$ such that $T\cap H \subseteq F$. Then we choose $U\in \UU$ such that $T/S\in \UU$ and $U$ meets each left coset of $T/S$ by $F/S$ in at most one point. Clearly, $|P\cap U|\leqslant 1$ for each cell of $\PP$.
\end{proof}

\begin{theorem}
Let $G$ be a group, $S$ be a subgroup of $G$ such that $|G:S|=\infty$. $X = G/S$. Each free ultrafilter on $X$ is $G$-selective if and only if, for each subgroup $T$ of $G$ such that $S\subset T \subset G$, either $|T:S|$ is finite or $|G:T|$ is finite.
\end{theorem}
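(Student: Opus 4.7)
The plan is to exploit the correspondence, recalled above the statement, between $G$-invariant partitions of $X=G/S$ and intermediate subgroups $S\subseteq H\subseteq G$: the partition $\PP$ determined by $H$ has $|G:H|$ cells, each of common size $|H:S|$. Both implications of the theorem are really statements about which pairs $(|H:S|,|G:H|)$ can arise.

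For the sufficiency ($\Leftarrow$), fix an arbitrary free ultrafilter $\UU$ on $X$ and an arbitrary $G$-invariant partition $\PP$ determined by a subgroup $H$. The trivial cases $H=G$ (the only cell is $X\in\UU$) and $H=S$ (cells are singletons, so $X$ itself is a transversal) aside, the hypothesis gives the dichotomy ``$|G:H|$ is finite or $|H:S|$ is finite''. In the first case $\PP$ has only finitely many cells, and the ultrafilter pigeonhole places one of them in $\UU$. In the second case every cell has common finite size $n=|H:S|$; choosing any bijection $\{1,\ldots,n\}\to P$ for each cell $P$ produces a decomposition $X=A_1\sqcup\cdots\sqcup A_n$ into $n$ transversals, and again pigeonhole places some $A_i$ in $\UU$, which is the required set meeting each cell in at most one point.

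For the necessity ($\Rightarrow$) I argue contrapositively. Suppose some intermediate $T$ with $S\subset T\subset G$ satisfies $|T:S|=|G:T|=\infty$, and let $\PP$ be the partition determined by $T$. Write $\mathcal{A}$ for the family of subsets of $X$ meeting each cell of $\PP$ in at most one point. I claim that
\[
\{X\setminus A:A\in\mathcal{A}\}\;\cup\;\{X\setminus P:P\in\PP\}\;\cup\;\{X\setminus F:F\subseteq X\text{ finite}\}
\]
has the finite intersection property, so by Zorn it extends to an ultrafilter $\UU$. Indeed, given finitely many such members, their intersection is $X$ minus the union of finitely many cells $P_1,\ldots,P_k$, finitely many members $A_1,\ldots,A_m$ of $\mathcal{A}$, and a finite set $F$. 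Since $|G:T|=\infty$ one can choose a cell $P\notin\{P_1,\ldots,P_k\}$; on this $P$ the sets $A_j$ contribute at most $m$ points total and $F$ contributes finitely many, while $P$ itself is infinite since $|T:S|=\infty$, so the intersection is non-empty. Any ultrafilter $\UU$ extending this family is automatically free, contains no cell of $\PP$, and contains no member of $\mathcal{A}$, witnessing the failure of $G$-selectivity at $\PP$.

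The main technical point will be the FIP verification: the construction uses both hypotheses $|T:S|=\infty$ and $|G:T|=\infty$ simultaneously, the former to ensure each chosen cell remains infinite after deleting finitely many forbidden points, the latter to ensure such a cell exists outside the finitely many excluded cells. Everything else is routine manipulation of the subgroup/partition dictionary already established in the section.
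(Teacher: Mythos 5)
Your proof is correct, and the two halves compare differently with the paper. The sufficiency direction is essentially the paper's argument: the paper merely observes that the partition determined by $T$ is either finite or consists of cells of common finite size $|T:S|$ and leaves the conclusion implicit, whereas you spell out the two pigeonhole steps (one cell of a finite partition lies in $\UU$; one of the $n$ transversals $A_1,\ldots,A_n$ obtained by labelling the points of each $n$-element cell lies in $\UU$). For the necessity direction you take a genuinely different route. The paper fixes infinitely many cells $g_nT/S$ and uses Zorn's lemma to produce a maximal family $\UU$ of subsets each of which meets infinitely many of these cells in an infinite set; maximality of a family with this partition-regular ``largeness'' property is what makes it an ultrafilter, and by construction no member of $\UU$ is a cell or a partial transversal. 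You instead dualize: you collect the complements of all cells of $\PP$, of all partial transversals, and of all finite sets, verify the finite intersection property directly --- using $|G:T|=\infty$ to find a cell outside the finitely many excluded ones and $|T:S|=\infty$ to see that this cell survives the deletion of at most $m$ points from the $A_j$'s plus a finite set --- and extend to an ultrafilter. Both constructions consume the two infiniteness hypotheses at exactly the same spot; yours avoids having to justify that a maximal family with a superset-closed, finitely partition-regular property is an ultrafilter, at the harmless cost of quantifying over the very large family of all partial transversals, since the FIP is only ever checked on finitely many sets at a time.
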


\begin{proof}
Suppose that there exists a subgroup $T$ of $G$ such that $S\subset T \subset G$ and $|T:S|=\infty$, $|G:T|=\infty$. We pick a family $\{g_nT: n\in \omega\}$ of distinct cosets of $G$ by $T$ and, using the Zorn's lemma, choose a maximal family $\UU$ of subsets of $G/S$ such that, for each $U\in \UU$,
$$\{n\in \omega: U\cap x_n(T/S)\mbox{ is infinite} \}$$
is infinite. Clearly, $\UU$ is an ultrafilter and, by the construction, each $U\in \UU$ meets infinitely many members of the $G$-invariant partition $\PP$ determined by $T$ in infinitely many points, so $\UU$ is not $G$-selective.

On the other hand, if $|T:S|<\infty$ then the $G$-invariant partition $\PP$ determined by $T$ consists of finite sets of cardinality $|T:S|$. If $|G:T|<\infty$ then $\PP$ is a finite partition. Therefore, each free ultrafilter of $G/S$ is $G$-selective.
\end{proof}

Let $G$ be an infinite Abelian group such that, for each subgroup $S$ of $G$ either $S$ is finite or $G/S$ is finite. If $G$ has an element of infinite order then $G$ is isomorphic to $\mathbb{Z}\times F$ where $F$ is finite. If $G$ is a torsion group then $G$ is isomorphic to $\mathbb{Z}_{p^\infty}\times F$ where $\mathbb{Z}_{p^\infty}$ is the Pr\"uffer $p$-group (see \cite[\S 3]{b3}), $F$ is finite. This is an elementary exercise on Abelian groups. Thus, the class of Abelian groups $G$ such that each ultrafilter on $G$ is $G$-selective is very narrow.

\begin{theorem}
If a free ultrafilter $\UU$ on $\omega$ is $G$-selective with respect to the action of any transitive group $G$ of permutations of $\omega$ then $\UU$ is selective.
\end{theorem}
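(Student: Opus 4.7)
The plan is to reduce selectivity of $\UU$, applied to an arbitrary partition $\PP$ of $\omega$ in which no cell lies in $\UU$, to a single application of the hypothesis to a specific transitive $G\leqslant\mathrm{Sym}(\omega)$. The driving constraint is that a transitive group on $\omega$ can only preserve partitions whose cells all have the same cardinality, so I must replace $\PP$ by a uniform partition $\PP^*$ of $\omega$ that is close enough to $\PP$ for the conclusion about $\PP^*$ to transfer back.

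My first step is the standard sorting by size: set $X_n=\bigcup\{P\in\PP:|P|=n\}$ for $n\in\{1,2,\ldots,\omega\}$ and pick the unique $n_0$ with $X_{n_0}\in\UU$. Let $\PP'=\{P\in\PP:P\subseteq X_{n_0}\}$; if $n_0=\omega$ and $\PP'$ consists of a single cell, that cell equals $X_{n_0}\in\UU$, contradicting the assumption, so $\PP'$ is countably infinite and I enumerate it as $\{P^*_i:i\in\omega\}$. I want $\PP^*$ to consist of cells of size $n_0$, retaining as many $P^*_i$ as possible.

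The crux is the case when $n_0<\omega$ and $|\omega\setminus X_{n_0}|$ is finite but not divisible by $n_0$: no finite family of cells of $\PP'$ combined with the complement yields a total size divisible by $n_0$, so only finitely many cells of $\PP'$ cannot be discarded without breaking uniformity. I resolve this by absorbing infinitely many. The pushforward $\UU_\PP:=\{A\subseteq\omega:\bigcup_{i\in A}P^*_i\in\UU\}$ is a free ultrafilter on $\omega$ (freeness because no $P^*_i\in\UU$). Splitting $\omega$ into two infinite halves and letting $A$ be the half not in $\UU_\PP$, both $A$ and $A^c$ are infinite and $\bigcup_{i\in A}P^*_i\notin\UU$. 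Setting $S=(\omega\setminus X_{n_0})\cup\bigcup_{i\in A}P^*_i$ (countably infinite), I partition $S$ into cells of size $n_0$ (into countably many infinite cells if $n_0=\omega$) and declare $\PP^*=\{P^*_i:i\notin A\}\cup(\text{partition of }S)$, a uniform partition.

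The remaining steps are routine. Enumerating $\PP^*=\{Q_0,Q_1,\ldots\}$ with $Q_j=\{q_{j,k}\}$, the subgroup of $\mathrm{Sym}(\omega)$ generated by the adjacent transpositions $(q_{0,k}\,q_{0,k+1})$ inside $Q_0$ together with the element-wise cell-swaps $\prod_k(q_{0,k}\,q_{j,k})$ is transitive on $\omega$ and preserves $\PP^*$. The hypothesis then yields either a cell of $\PP^*$ in $\UU$ (impossible: cells of $\PP^*\cap\PP'$ are cells of $\PP$, while the remaining cells lie in $S$, whose two constituents $\omega\setminus X_{n_0}$ and $\bigcup_{i\in A}P^*_i$ are both missing from $\UU$) or some $U\in\UU$ meeting every cell of $\PP^*$ in at most one point; then $U':=U\cap\bigcup_{i\notin A}P^*_i\in\UU$ meets every cell of $\PP$ in at most one point, by a short case check on whether $P\in\PP$ lies in $\PP^*\cap\PP'$, in $\PP'\setminus\PP^*$, or outside $\PP'$. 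The pushforward-ultrafilter step is the main obstacle; everything else is explicit construction and bookkeeping.
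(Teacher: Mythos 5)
Your proof is correct, but it takes a genuinely different route from the paper's. The paper \emph{coarsens} $\PP$: it splits $\PP$ into countably many subfamilies $\PP_n$ whose unions $Q_n$ are all infinite and all outside $\UU$ (a recursive halving of the family of finite blocks arranges this), identifies $\omega$ with $G\times G$ for a countable group $G$ so that $Q_n=\{g_n\}\times G$, and notes that $\{Q_n\}$ is invariant under the regular action of $G\times G$; since every cell of $\PP$ lies inside some $Q_n$, the set $U$ produced by $(G\times G)$-selectivity works for $\PP$ with no further adjustment. You instead exploit the observation that a partition invariant under a transitive group must have cells of constant size: you isolate the size class $X_{n_0}\in\UU$, keep most of its cells intact, and repackage the remainder of $\omega$ together with an infinite subfamily of $\UU$-small union (extracted via the pushforward ultrafilter) into a junk set $S\notin\UU$ that you re-cut into cells of size $n_0$, then build an explicit transitive permutation group preserving the result. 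Both arguments turn on the same ultrafilter trick --- halving an index set to find an infinite subfamily whose union avoids $\UU$ --- but the paper's coarsening makes the transfer back to $\PP$ immediate and needs no case analysis on cell sizes, whereas your version pays for the uniformization with the explicit group construction and the final restriction of $U$ to $\omega\setminus S$ (which you handle correctly). One small point to tighten: to conclude that $\PP'$ is countably infinite you should observe that if $\PP'$ were finite then $X_{n_0}=\bigcup\PP'$ would be a finite union of non-members of $\UU$ and hence not in $\UU$; ruling out only the single-cell case is not enough.
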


\begin{proof}
Let $\PP$ be a partition of $\omega$ such that each member of $\PP$ is not a member of $\UU$. We state that $\PP$ can be partitioned $\PP = \cup_{n\in \omega}\PP_n$ so that, for each $n\in \omega$, $\cup \PP_n$ is infinite and is not a member of $\UU$. If the set $\PP'$ of all finite blocks of $\PP$ is finite, we take an arbitrary infinite block $P_0$, put $\PP_0 = \{\PP', \{P_0\}\}$ and enumerate $\PP_1, \PP_2, \ldots$ all remaining infinite blocks of $\PP$. If $\PP'$ is infinite, we partition $\PP' = \PP'_0 \cup \PP'_1$ such that $\PP'_0$ and $\PP'_1$ are infinite. We take $i \in \{0,1\}$ (say $i=0$) such that $\cup \PP'_0 \notin \UU$. Then we repeat this procedure for $\PP'_1$ and so on. After $\omega$ steps, we get a desired partition of $\PP'$. After enumeration of $\{\PP'_n: n\in\omega\}$ of infinite blocks of $\PP$, we obtain a desired partition of $\PP$.

For each $n\in \omega$, we put $Q_n = \cup \PP_n$, take an arbitrary countable group $G = \{g_n: n\in \omega\}$ and identify $\omega$ with $G\times G$ so that $Q_n = \{g_n\}\times G$, $n\in \omega$. We consider $G\times G$ as a regular $(G\times G)$-space and note that the partition $\{Q_n: n\in \omega\}$ of $G\times G$ is ($G\times G$)-invariant. Since $\UU$ is ($G\times G$)-selective, there exists $U\in \UU$ such that $|U\cap Q_n|\leqslant 1$ for each $n\in \omega$. By the construction of $Q_n$, $|U\cap P|\leqslant 1$ for each $P\in \PP$. Hence, $\UU$ is selective.
\end{proof}

\section{Ramsey ultrafilters}

\begin{theorem}
For a $G$-space $X$, each $G$-Ramsey ultrafilter on $X$ is $G$-selective.
\end{theorem}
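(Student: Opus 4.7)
The plan is to derive $G$-selectivity from $G$-Ramsey by turning a $G$-invariant partition into a $G$-invariant $2$-coloring, applying the Ramsey property, and then reading off the two alternatives in the definition of $G$-selective from the two possible monochromatic values.

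Concretely, let $\UU$ be a $G$-Ramsey ultrafilter on $X$ and let $\PP$ be a $G$-invariant partition of $X$. Define $\chi:[X]^2\to\{0,1\}$ by $\chi(\{x,y\})=1$ if $x$ and $y$ lie in the same cell of $\PP$, and $\chi(\{x,y\})=0$ otherwise. The first routine check is that $\chi$ is $G$-invariant: given $g\in G$ and $\{x,y\}\in[X]^2$, the cells of $\PP$ containing $x$ and $y$ are mapped by $g$ to the cells containing $gx$ and $gy$ (using $G$-invariance of $\PP$), so $x,y$ belong to a common cell if and only if $gx,gy$ do. Hence $\chi$ is an admissible coloring for the $G$-Ramsey property.

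Now apply $G$-Ramsey to obtain $U\in\UU$ with $\chi|_{[U]^2}$ constant. Since $\UU$ is free and $X$ is infinite, $U$ is infinite, so $[U]^2\neq\emptyset$ and the constant value is well-defined. If that value is $0$, then every pair of distinct points of $U$ lies in different cells of $\PP$, which is exactly the statement $|U\cap P|\leqslant 1$ for each $P\in\PP$. If instead the value is $1$, fix any $x_0\in U$; then every other $y\in U$ lies in the same cell of $\PP$ as $x_0$, so $U$ is contained in that single cell $P_0\in\PP$, and since $\UU$ is an ultrafilter and $U\subseteq P_0\in\UU$, we get $P_0\in\UU$. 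In either case the $G$-selective dichotomy is satisfied, so $\UU$ is $G$-selective.

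There is essentially no obstacle here: the argument is a clean one-step reduction, and the only things to verify are the $G$-invariance of $\chi$ (immediate from $G$-invariance of $\PP$) and the observation that a monochromatic $1$-colored set is contained in a single cell. No Zorn-type or inductive construction is needed, and the proof does not depend on any of the more delicate analysis carried out in the previous sections.
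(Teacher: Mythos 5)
Your proof is correct and follows essentially the same route as the paper's: encode the $G$-invariant partition as a $G$-invariant $2$-coloring, apply the $G$-Ramsey property, and read off the two selectivity alternatives from the monochromatic value (you merely swap the roles of the colors $0$ and $1$, which is immaterial). Your extra remarks on well-definedness of the constant value and on upward closure of the ultrafilter are fine but not needed beyond what the paper already does.
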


\begin{proof}
Let $\PP$ be a $G$-invariant partition of $X$. We define a coloring $\chi:[X]^2 \to \{0,1\}$ by the rule: $\chi(\{x,y\})=0$ if and only if $x,y$ are in the same cell of the partition $\PP$. Since $\PP$ is $G$-invariant, $\chi$ is also $G$-invariant. We take $U\in \UU$ such that $\chi|_{[U]^2}\equiv i$ for some $i\in \{0,1\}$. If $i=0$ and $x\in U$ then $U$ is contained in the block $P$ of $\PP$ such that $x\in P$. If $i=1$ then $U$ meets each block of $\PP$ in at most one point. Hence, $\UU$ is $G$-selective.
\end{proof}

Let $G$ be a group with the identity $e$. Each $G$-invariant 2-coloring of the regular $G$-space can be described as follows. We say that a coloring $\chi':G\setminus\{e\} \to \{0,1\}$ is {\it symmetric} if $\chi'(x) = \chi'(x^{-1})$ for each $x\in G \setminus \{e\}$. Then we put $\chi(\{x,y\}) = \chi'(x^{-1}y)$ and note that $\chi(\{gx,gy\})=\chi(\{x,y\})$ for all $\{x,y\}\in [G]^2$ and $g\in G$. On the other hand, if a coloring $\chi: [G]^2 \to \{0,1\}$ is $G$-invariant then the coloring $\chi': G\setminus\{e\}\to \{0,1\}$, $\chi'(x) = \chi(\{e,x\})$ is symmetric and uniquely determines $\chi$.

We fix an arbitrary linear ordering $\leqslant$ of $G$ and, for each subset $U$ of $G$, put $D(U) = \{x^{-1}y: x,y\in U, x < y \}$. For an ultrafilter $\UU$ on $G$, we define a family $D(\UU)$ of subsets of $G$ by

$$V\in D(\UU)\Leftrightarrow  \exists U \in \UU: D(U)\subseteq V.$$

We use also the product $\V\UU$ of ultrafilters on $G$ defined as follows (see \cite[Chapter 4]{b4}). We take an arbitrary $V\in \UU$ and, for each $g\in V$, pick $U_g\in \UU$. Then $\cup_{g\in V}gU_g$ is a member of $\V\UU$, and each member of the ultrafilter $\V\UU$ contains a subset of this form. We denote $\UU^{-1} = \{ U^{-1} : U\in \UU\}$. 

\begin{theorem}
Let $\leqslant$ be the natural linear ordering of $\mathbb{Z}$, $\mathbb{Z}^+ = \{z\in \mathbb{Z}: z > 0\}$, $\UU$ be a free ultrafilter on $\mathbb{Z}$ such that $\mathbb{Z}^+ \in \UU$. Then the following statements hold:

\begin{itemize}
\item[(i)] $D(\UU) \subseteq (-\UU)+\UU$;
\item [(ii)] $\UU$ is $\mathbb{Z}$-Ramsey if and only if $D(\UU) = (-\UU)+\UU$.

\end{itemize}
\end{theorem}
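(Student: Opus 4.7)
The plan is to handle (i) by unpacking definitions and (ii) by a Ramsey-style argument in each direction.

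For (i), given any $U\in\UU$, I will directly exhibit $D(U)$ as a basic member of $(-\UU)+\UU$. Concretely, set $V:=-U\in -\UU$ and, for each $g=-w\in V$ (so $w\in U$), put $U_g:=U\cap (w,+\infty)$. Since $\mathbb{Z}^+\in\UU$ and $\UU$ is free, the tail $(w,+\infty)$ lies in $\UU$ for every $w\in\mathbb{Z}$, hence $U_g\in\UU$. A direct check shows $\bigcup_{g\in V}(g+U_g)=D(U)$, so $D(U)\in(-\UU)+\UU$; by upward closure of ultrafilters, $D(\UU)\subseteq(-\UU)+\UU$.

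For the ``if'' direction of (ii), I will pass from a $\mathbb{Z}$-invariant coloring $\chi$ to its associated symmetric $\chi':\mathbb{Z}\setminus\{0\}\to\{0,1\}$ and split $\mathbb{Z}^+=A_0\cup A_1$, where $A_i=\chi'^{-1}(i)\cap\mathbb{Z}^+$. Part (i) applied to any $U\in\UU$ with $U\subseteq\mathbb{Z}^+$ gives $\mathbb{Z}^+\in(-\UU)+\UU$, and since $(-\UU)+\UU$ is an ultrafilter on $\mathbb{Z}$, one of $A_0,A_1$ belongs to $(-\UU)+\UU=D(\UU)$. Pick $U\in\UU$ with $D(U)\subseteq A_i$; then every pair $x<y$ in $U$ satisfies $\chi(\{x,y\})=\chi'(y-x)=i$, so $\chi|_{[U]^2}\equiv i$ and $\UU$ is $\mathbb{Z}$-Ramsey.

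The ``only if'' direction is the main technical point. Given $A\in(-\UU)+\UU$, I unfold the definition to obtain $W\in\UU$ and a choice $U_w\in\UU$ for $w\in W$ (all of which may be taken inside $\mathbb{Z}^+$) such that $-w+U_w\subseteq A$ for each $w\in W$. I then define $\chi'(n)=0$ iff $n\in A$ for $n>0$, extend symmetrically, and let $\chi$ be the induced $\mathbb{Z}$-invariant 2-coloring. Applying the $\mathbb{Z}$-Ramsey hypothesis and intersecting with $\mathbb{Z}^+$ produces $U\in\UU$ with $U\subseteq\mathbb{Z}^+$ and $\chi|_{[U]^2}\equiv c$. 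If $c=0$ then $D(U)\subseteq A$ and we are done. The crux is ruling out $c=1$: replacing $U$ by $U\cap W\in\UU$ and fixing any $w$ in this set, the set $U\cap W\cap U_w\in\UU$ is infinite, so contains some $u>w$; then $u-w$ lies simultaneously in $D(U)$ and in $-w+U_w\subseteq A$, contradicting $D(U)\cap A=\emptyset$.

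I expect this refutation of the $c=1$ alternative to be the trickiest step, since it is where the product-ultrafilter structure of $A\in(-\UU)+\UU$ is genuinely used; the remaining work is bookkeeping with the natural order on $\mathbb{Z}$ and the standing hypothesis $\mathbb{Z}^+\in\UU$, together with the standard description of $\mathbb{Z}$-invariant 2-colorings via symmetric colorings on $\mathbb{Z}\setminus\{0\}$ recalled before the theorem.
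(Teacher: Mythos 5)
Your proof is correct and follows essentially the same route as the paper's: part (i) by exhibiting $D(U)$ as a basic member of $(-\UU)+\UU$ using tails of $U$, and part (ii) by translating between members of $(-\UU)+\UU$ and symmetric $2$-colorings of $\mathbb{Z}\setminus\{0\}$ in both directions. Your explicit refutation of the $c=1$ alternative is exactly the step the paper's argument leaves implicit (its sentence there is garbled), and you fill it in correctly.
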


\begin{proof}
$(i)$ We take an arbitrary $U\in \UU$ such that $U\subseteq \mathbb{Z}^+ $. For each $z\in U$, put $U(z) = \{x\in U: x > z\}$. Then $D(U)= \cup_{z\in U}(-z + U(z))$. Since $U(z) \in \UU$, by the definitions of $-\UU$ and $(-\UU) +\UU$, we have $D(\UU)\subseteq (-\UU) +\UU$.

$(ii)$ Assume that $\UU$ is $\mathbb{Z}$-Ramsey and take $U\in \UU$, $U \subseteq \mathbb{Z}^+$. For each $z\in U$, we pick an arbitrary $U_z \in \UU$ such that $z<x$ for each $x\in U$. Then we put $W = \cup_{z\in U} (-z + U_z)$ and define a symmetric coloring $\chi': \mathbb{Z}\setminus \{0\} \to \{0,1\}$. If $x\in W \cup (-W)$, we put $\chi'(x) = 0$, otherwise, $\chi'(x) =1$. We take a coloring $\chi : [\mathbb{Z}]^2 \to \{0,1\}$ determined by $\chi'$. Since $\UU$ is $\mathbb{Z}$-Ramsey, there is $V\in \UU$, $V\subseteq U$ such that $\chi|_{[V]^2}\equiv i$ for some $i\in \{0,1\}$. By the definition of $\chi'$, $x=0$ and $D(V)\subseteq W$. Hence, $W\in D(\UU)$ so $(-\UU) +\UU \subseteq D(\UU)$. By $(i)$, $D(\UU) \subseteq (-\UU)+\UU$ so $D(\UU) = (-\UU)+\UU$.

On the other hand, let $D(\UU) = (-\UU)+\UU$. We consider an arbitrary symmetric coloring $\chi' :\mathbb{Z}\setminus \{0\} \to \{0,1\} $ and denote by $\chi$ corresponding coloring of $[\mathbb{Z}]^2$. Since $(-\UU) + \UU$ is an ultrafilter, there is $W \in (-\UU) + \UU$, $W\subseteq \mathbb{Z}^+$ such that $\chi'|_W \equiv i$, $i\in \{0,1\}$. We take $V\in \UU$ such that $D(V) \subseteq W$. Then $\chi|_{[V]^2}\equiv i$ so $\UU$ is $\mathbb{Z}$-Ramsey.
\end{proof}

Let $G$ be a discrete group. The Stone-\v{C}ech compactification $\beta G$ of $G$ can be identified with the set of all ultrafilters on $G$ and $\beta G$ with above defined multiplication is a semigroup which has the minimal ideal $K(\beta G)$ (see \cite[Chapter 6]{b4}).

\begin{corollary}
Each ultrafilter $\UU$ from the closure $cl\; K(\beta \mathbb{Z})$ is not $\mathbb{Z}$-Ramsey.
\end{corollary}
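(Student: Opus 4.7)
The plan is to prove the contrapositive: if $\UU$ is $\mathbb{Z}$-Ramsey then $\UU \notin cl\; K(\beta\mathbb{Z})$. Two standard facts about the Stone--\v{C}ech compactification will drive the argument: (i) $\UU \in cl\; K(\beta\mathbb{Z})$ if and only if every member of $\UU$ is piecewise syndetic; and (ii) $cl\; K(\beta\mathbb{Z})$ is a closed right ideal of $\beta\mathbb{Z}$ which is invariant under the involution $x \mapsto -x$, the right-ideal property following from right-continuity of addition in $\beta\mathbb{Z}$, and the invariance from the minimality of $K(\beta\mathbb{Z})$ as a two-sided ideal.

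Suppose for contradiction that $\UU \in cl\; K(\beta\mathbb{Z})$ is $\mathbb{Z}$-Ramsey. By symmetry we may assume $\mathbb{Z}^+ \in \UU$. By Theorem 5(ii), $D(\UU) = (-\UU) + \UU$ is an ultrafilter. Since $-\UU \in cl\; K(\beta\mathbb{Z})$ by the involution-invariance in (ii), the right-ideal property gives $(-\UU) + \UU \in cl\; K(\beta\mathbb{Z})$. By (i), every member of $D(\UU)$ is then piecewise syndetic.

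To derive a contradiction I will exhibit a member of $D(\UU)$ that fails to be piecewise syndetic. Partition $\mathbb{Z}^+$ into the ``even-scale'' set $E = \bigcup_{k \geq 0}[2^{2k}, 2^{2k+1})$ and the ``odd-scale'' set $O = \bigcup_{k \geq 0}[2^{2k+1}, 2^{2k+2})$. Since $\mathbb{Z}^+ \in D(\UU)$ (because $D(U) \subseteq \mathbb{Z}^+$ for any $U \in \UU$ with $U \subseteq \mathbb{Z}^+$) and $D(\UU)$ is an ultrafilter, one of $E$, $O$ lies in $D(\UU)$. Neither, however, is piecewise syndetic: given finite $F \subset \mathbb{Z}$ with $M = \max_{f \in F}|f|$, inspection of the $k$th ``odd-block'' $[2^{2k+1}, 2^{2k+2})$ shows that $E + F$ covers at most its first and last $M$ integers (these come from positive, respectively negative, translates of the adjacent $E$-blocks), leaving a central gap of length $2^{2k+1} - 2M \to \infty$; the same argument with $E$ and $O$ swapped handles $O$. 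This contradicts the conclusion of the previous paragraph.

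The main step is the ideal-theoretic observation that $(-\UU) + \UU$ stays in $cl\; K(\beta\mathbb{Z})$; once that is in place, the dyadic partition into $E$ and $O$ yields the combinatorial failure of piecewise syndeticity by a direct block-counting argument.
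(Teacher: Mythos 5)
There is a genuine gap in the final combinatorial step. Your sets $E=\bigcup_{k\geq 0}[2^{2k},2^{2k+1})$ and $O=\bigcup_{k\geq 0}[2^{2k+1},2^{2k+2})$ are both \emph{thick}: $E$ contains the interval $[2^{2k},2^{2k+1})$ of length $2^{2k}\to\infty$, and likewise for $O$. A thick set is piecewise syndetic (take $F=\{0\}$ in the definition), so both cells of your partition \emph{are} piecewise syndetic and no contradiction is obtained. Your block-counting argument about the ``central gap'' in the odd blocks shows only that $F+E\neq\mathbb{Z}$ for every finite $F$, i.e.\ that $E$ is not \emph{syndetic}; piecewise syndeticity asks only that $F+E$ contain arbitrarily long intervals, which $E$ already does by itself. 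Moreover this strategy cannot be repaired by a cleverer partition: piecewise syndeticity is partition regular, so in any finite partition of $\mathbb{Z}^{+}$ at least one cell is piecewise syndetic, and the conclusion ``every member of $D(\UU)$ is piecewise syndetic'' can therefore never be contradicted by a partition argument of this shape. (Your ideal-theoretic preliminaries --- $-\UU\in cl\,K(\beta\mathbb{Z})$ and $cl\,K(\beta\mathbb{Z})$ a right ideal, hence $(-\UU)+\UU\in cl\,K(\beta\mathbb{Z})$ --- are correct but end up delivering too weak a conclusion.)

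The repair is to apply your fact (i) to $\UU$ itself and then pass to difference sets, which is what the paper does. If every $U\in\UU$ is piecewise syndetic, then $U-U$ is syndetic for every $U\in\UU$: choosing finite $F$ with $F+U$ thick and testing the pair $\{0,g\}$ inside a long interval of $F+U$ gives $g\in (F-F)+(U-U)$, so $\mathbb{Z}=K+U-U$ for $K=F-F$ (this is the form cited in the paper as Corollary 5.0.28 of \cite{b2}). Since $U-U=D(U)\cup(-D(U))\cup\{0\}$, Theorem 5(ii) lets you pick $U\in\UU$ and $i$ with $D(U)\subseteq Z_i$, whence $U-U\subseteq Z_i\cup(-Z_i)\cup\{0\}$. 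The complement of that set is thick (it contains all the blocks of the opposite parity), so $U-U$ is \emph{not} syndetic --- and here your gap-counting computation is exactly the right tool. That yields the contradiction. In short: the dichotomy you need is syndetic versus non-syndetic for $U-U$, not piecewise syndetic versus non-piecewise-syndetic for members of $D(\UU)$.
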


\begin{proof}
On the contrary, suppose that some ultrafilter $\UU \in cl \; K(\beta \mathbb{Z})$ is $\mathbb{Z}$-Ramsey. Since $\UU \in cl\; K(\beta \mathbb{Z})$, by \cite[Corollary 5.0.28]{b2}, for every $U\in \UU$, there exists a finite subset $K$ such that $\mathbb{Z} = K + U - U$. We note that $U-U = D(U) \cup (-D(U)) \cup\{0\}$. Now we partition $\mathbb{Z}^+ = Z_0 \cup Z_1$, 

$$Z_0 = \bigcup_{n\in \omega}[2^{2n}, 2^{2n+1}), \quad Z_1 = \mathbb{Z}^+ \setminus Z_0,$$
and applying Theorem 5 $(ii)$, choose $U\in \UU$ and $i\in \{0,1\}$ such that $D(U)\subseteq Z_i$. Clearly, $F + U - U \ne \mathbb{Z}$ for each finite subset $F$ of $\mathbb{Z}$. Hence, $\UU \notin K(\beta \mathbb{Z})$ and we get a contradiction.
\end{proof}

A free ultrafilter $\UU$ on an Abelian group $G$ is said to be a {\it Schur ultrafilter} if, for any $U\in \UU$, there are distinct $x,y\in U$ such that $x+y \in U$. We note that each idempotent from $\beta \mathbb{Z} \setminus \mathbb{Z}$ is a Schur ultrafilter.

\begin{corollary}
Each Schur ultrafilter $\UU$ on $\mathbb{Z}$ is not $\mathbb{Z}$-Ramsey.
\end{corollary}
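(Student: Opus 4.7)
I will argue by contradiction: suppose $\UU$ is Schur and $\mathbb{Z}$-Ramsey. Since the Schur property is preserved under $A \mapsto -A$, and $\UU$ is free, I may assume $\mathbb{Z}^+ \in \UU$ (replacing $\UU$ with $-\UU$ if needed). Then Theorem 5(ii) applies, giving $D(\UU) = (-\UU) + \UU$.

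The key structural step is to show the Schur hypothesis forces $\UU \subseteq D(\UU)$. Given any $U \in \UU$ with $U \subseteq \mathbb{Z}^+$, I apply the $\mathbb{Z}$-Ramsey property to the symmetric coloring $\chi'(n) = 0 \iff |n| \in U$: this yields $V \in \UU$, $V \subseteq \mathbb{Z}^+$, with $\chi$ constant on $[V]^2$, so either $D(V) \subseteq U$ or $D(V) \cap U = \emptyset$. The second alternative is killed by Schur applied to $V \cap U \in \UU$: one obtains distinct $a, b \in V \cap U$ with $a + b \in V \cap U$, and then $a = (a+b) - b$ and $b = (a+b) - a$ both lie in $D(V) \cap U$, contradicting the assumption. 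So $D(V) \subseteq U$, i.e.\ $U \in D(\UU)$, which gives $\UU \subseteq D(\UU)$. Since $D(\UU)$ is an ultrafilter under the Ramsey hypothesis, $\UU = D(\UU) = (-\UU) + \UU$.

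To finish, I must derive a contradiction from these identities together with Schur, and this is where I expect the main difficulty. My plan parallels the argument of Corollary 1: apply $\mathbb{Z}$-Ramsey to the partition $\mathbb{Z}^+ = Z_0 \cup Z_1$ with $Z_0 = \bigcup_n [2^{2n}, 2^{2n+1})$ to get $U^* \in \UU$ with $D(U^*) \subseteq Z_i$, and iterate $\UU \subseteq D(\UU)$ (using the monotonicity $D^k(V) \subseteq D^{k+1}(V)$, which holds for infinite $V$ via the identity $y-x = (z-x) - (z-y)$ applied to $x < y < z$ in $V$) to obtain $V_k \in \UU$ with $D^k(V_k) \subseteq Z_i$ for every $k \geq 1$. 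A Schur triple $a < b$ with $a + b \in V_k$, together with the infinitude of $V_k$, then feeds a Euclidean-style production of $\gcd(a,b)$-multiples inside $D^k(V_k)$; since the scale-doubling partition $Z_0, Z_1$ places consecutive multiples of any fixed integer alternately in $Z_0$ and $Z_1$ at sufficiently large scale, this contradicts $D^k(V_k) \subseteq Z_i$. As an alternative, one might try to exhibit some explicit $A \in \UU$ for which $\{z : A + z \in \UU\} \notin \UU$, directly refuting $\UU = (-\UU) + \UU$.
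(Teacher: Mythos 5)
Your derivation of the identity $\UU = D(\UU) = (-\UU)+\UU$ is correct, and it is exactly the identity the paper's proof arrives at; in fact you supply the details (the auxiliary symmetric coloring $\chi'(n)=0 \Leftrightarrow |n|\in U$, and the observation that a Schur triple gives $a=(a+b)-b\in D(V)\cap U$) that the paper compresses into the single sentence ``Since $\UU$ is a Schur ultrafilter, by Theorem 5, $D(\UU)=\UU=(-\UU)+\UU$.'' Up to that point your argument is sound and matches the intended route.

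The gap is in the final step. The paper closes the proof by citing a known theorem (Hindman--Strauss, Corollary 13.19): $(-\UU)+\UU\ne\UU$ for \emph{every} free ultrafilter $\UU$ on $\mathbb{Z}$, which immediately contradicts the identity just derived. You instead propose to manufacture the contradiction by hand, and that part is only a sketch whose key combinatorial claim is false: consecutive multiples of a fixed integer $d$ are \emph{not} placed alternately in $Z_0$ and $Z_1$ at large scales, since the block $[2^{2n},2^{2n+1})$ has length $2^{2n}$ and therefore contains roughly $2^{2n}/d$ consecutive multiples of $d$, all on the same side of the partition (what is true is only that $d\mathbb{Z}^+$ meets both $Z_0$ and $Z_1$ in infinite sets). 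There is also a quantifier problem in the iteration: the Schur triple $a<b<a+b$ you extract lives in $V_k$, so the number of subtractive-Euclid steps needed to reach $\gcd(a,b)$ and then to sweep out its multiples (which can be of order $(a+b)/\gcd(a,b)$) depends on $V_k$, which was chosen \emph{after} $k$; nothing guarantees the process terminates within the $k$ levels of $D$ that you control. As written the proof is therefore incomplete at its crucial last step; the clean way to finish is to invoke (or separately prove) the fact that $(-\UU)+\UU\ne\UU$ for free ultrafilters on $\mathbb{Z}$, exactly as the paper does.
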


\begin{proof}
On the contrary, we suppose that $\UU$ is $\mathbb{Z}$-Ramsey and $\mathbb{Z}^+ \in \UU$. Since $\UU$ is a Schur ultrafilter, by Theorem 5, $D(\UU) = \UU = -\UU + \UU$. By \cite[Corollary 13.19]{b4}, $(-\UU) + \UU \ne \UU $ for every free ultrafilter $\UU$ on $\mathbb{Z}$.
\end{proof}

A free ultrafilter $\UU$ on $\mathbb{Z}$ is called {\it prime} if $\UU$ cannot be represented as a sum of two free ultrafilters.

\begin{corollary}
Every $\mathbb{Z}$-Ramsey ultrafilter on $\mathbb{Z}$ is prime.
\end{corollary}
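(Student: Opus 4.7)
I argue by contradiction: suppose $\UU$ is $\mathbb{Z}$-Ramsey and $\UU=\V+\mathcal{W}$ with both $\V,\mathcal{W}$ free. Replacing $\UU$ by $-\UU$ and swapping the factors accordingly if needed (using the identity $-(p+q)=-q+(-p)$ in $\beta\mathbb{Z}$, together with the fact that $\mathbb{Z}$-Ramseyness and non-primeness are both preserved under negation), I may arrange $\mathbb{Z}^+\in\UU$, $\mathbb{Z}^+\in\V$, $\mathbb{Z}^+\in\mathcal{W}$; the remaining sign configurations are handled by the analogous argument after reflecting the ordering. The goal is to leverage $\UU=\V+\mathcal{W}$ together with Theorem 5 to pin $\UU$ into a configuration forbidden by \cite[Corollary 13.19]{b4}, paralleling the strategy of Corollary 3.

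I first unpack the product: every $U\in\UU$ with $U\subseteq\mathbb{Z}^+$ contains some $\bigcup_{v\in B}(v+C_v)$ with $B\in\V$ and $C_v\in\mathcal{W}$, all inside $\mathbb{Z}^+$. For any $v_1<v_2$ in $B$ and any $w\in C_{v_1}\cap C_{v_2}\in\mathcal{W}$ (nonempty, being a member of an ultrafilter), both sums $v_i+w$ lie in $U$, so $v_2-v_1\in D(U)$; hence $D(B)\subseteq D(U)$, and a parallel computation gives $D(C_v)\subseteq D(U)$. Consequently the filter $D(\UU)$ is contained in both $D(\V)$ and $D(\mathcal{W})$. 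By Theorem 5 (ii), $D(\UU)=(-\UU)+\UU$ is an \emph{ultrafilter}; by Theorem 5 (i), $D(\V)\subseteq(-\V)+\V$ (also an ultrafilter). Maximality of ultrafilters then forces $D(\V)=(-\V)+\V=(-\UU)+\UU$, and reading Theorem 5 (ii) in the reverse direction shows $\V$ is itself $\mathbb{Z}$-Ramsey; the parallel argument for $\mathcal{W}$ gives $(-\mathcal{W})+\mathcal{W}=(-\UU)+\UU$ and $\mathcal{W}$ is $\mathbb{Z}$-Ramsey.

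Writing $\mathcal{D}:=(-\UU)+\UU=(-\V)+\V=(-\mathcal{W})+\mathcal{W}$, associativity in $\beta\mathbb{Z}$ combined with $\UU=\V+\mathcal{W}$ yields
\[
\mathcal{D}\;=\;(-\UU)+\UU\;=\;-\mathcal{W}+(-\V)+\V+\mathcal{W}\;=\;-\mathcal{W}+\mathcal{D}+\mathcal{W},
\]
and the symmetric computation via $\UU+(-\UU)$ gives $\mathcal{D}=\V+\mathcal{D}+(-\V)$. The main obstacle is to leverage these conjugation identities, together with $\mathcal{W}+(-\mathcal{W})=\mathcal{D}$ (which follows by applying Theorem 5 to both $\mathcal{W}$ and its negation, both being $\mathbb{Z}$-Ramsey), to produce a concrete contradiction with $(-\UU)+\UU\ne\UU$ from \cite[Corollary 13.19]{b4}. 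This is the delicate algebraic step of the proof: because $\beta\mathbb{Z}$ is only a left-topological semigroup and admits no general cancellation, the final collapse must be arranged with care, mirroring the way the Schur hypothesis forces $\UU=(-\UU)+\UU$ in the proof of Corollary 3.
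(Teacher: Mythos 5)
Your first two paragraphs do correctly recover what the paper isolates as Claim~1 of its proof: the computations $v_2-v_1=(v_2+w)-(v_1+w)$ and $w_2-w_1=(v+w_2)-(v+w_1)$, together with the fact that $D(\UU)=(-\UU)+\UU$ is an ultrafilter squeezed inside $(-\V)+\V$, give $D(\V+\mathcal{W})=D(\V)=D(\mathcal{W})$ and the $\mathbb{Z}$-Ramseyness of both factors. But the proof stops there. Your final paragraph records the conjugation identities $\mathcal{D}=-\mathcal{W}+\mathcal{D}+\mathcal{W}$ and $\mathcal{D}=\V+\mathcal{D}+(-\V)$ and then explicitly defers ``the delicate algebraic step''; no contradiction is actually derived, and it is not clear one can be extracted from those identities alone. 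The missing idea is the paper's Claim~2: \emph{every $\mathbb{Z}$-Ramsey ultrafilter is right cancellable in $\beta\mathbb{Z}$}. This follows from \cite[Theorem 8.18]{b4}: a non-right-cancellable element satisfies $\mathcal{W}=\mathcal{E}+\mathcal{W}$ for some idempotent $\mathcal{E}$, which by Claim~1 would itself be $\mathbb{Z}$-Ramsey, contradicting Corollary~2 since idempotents are Schur ultrafilters. With right cancellability in hand one cancels $\mathcal{W}$ on the right in $(-\V)+(-\mathcal{W})+\V+\mathcal{W}=(-\mathcal{W})+\mathcal{W}$ to get $(-\V)+(-\mathcal{W})+\V=-\mathcal{W}$; since $\mathbb{Z}^+$-membership of a sum is decided by the last summand, this forces $\mathbb{Z}^+\in\V\Leftrightarrow\mathbb{Z}^+\notin\mathcal{W}$, while $(-\V)+\V=(-\mathcal{W})+\mathcal{W}$ forces $\mathbb{Z}^+\in\V\Leftrightarrow\mathbb{Z}^+\in\mathcal{W}$ --- the contradiction. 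Nothing in your outline plays the role of this cancellation step, and without it the argument does not close.

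Two further problems sit in your normalization. The identity you invoke is wrong: inversion is a group automorphism of $\mathbb{Z}$, so its continuous extension to $\beta\mathbb{Z}$ is a homomorphism and $-(p+q)=(-p)+(-q)$, not $(-q)+(-p)$; these genuinely differ because $\beta\mathbb{Z}$ is not commutative. More seriously, negation flips the signs of $\V$ and $\mathcal{W}$ simultaneously, and since $\mathbb{Z}^+\in\V+\mathcal{W}$ if and only if $\mathbb{Z}^+\in\mathcal{W}$, you cannot in general arrange $\mathbb{Z}^+$ to belong to all three ultrafilters: the configuration in which $\V$ and $\mathcal{W}$ concentrate on opposite half-lines is not reducible to the all-positive case by reflection. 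In the paper's argument it is exactly the interplay of these two signs that produces the final contradiction, so assuming them away at the outset conceals the crux of the proof rather than simplifying it.
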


\begin{proof}
We need two auxiliary claims.

Claim 1. If $\UU,\V$ are free ultrafilters and $\UU + \V$ is $\mathbb{Z}$-Ramsey then $D(\UU+\V) = D(\UU) = D(\V)$, in particular (see Theorem 5) $\UU$ and $\V$ are $\mathbb{Z}$-Ramsey.

Let $\mathcal{W} = \UU + \V$, $U\in \UU$, $V_x\in \V$, $x\in U$ and $W = \bigcup_{x\in U}xV_x$. To see that $D(\V) = D(\mathcal{W})$, we fix $x\in U$ and put $V'_x =\{y\in V: y > x\}$. If $y_1, y_2 \in V_x$ and $y_2 > y_1$ then $y_2 - y_1 = (x+y_2) - (x + y_1)$ so $D(V_x)\subseteq D(W)$ and $D(\mathcal{W}) = D(\V)$ because $D(\mathcal{W})$ is an ultrafilter.

To show that $D(\UU) = D(\mathcal{W})$, we take $x_1, x_2 \in U$, $x_1 < x_2$ and pick an arbitrary $y\in V_{x_1} \cap V_{x_2}$. Since $x_2 - x_1 = (x_2+y) - (x_1 + y)$ and $x_1 + y, x_2 + y \in W$, $D(U)\subseteq D(W)$ so $D(\mathcal{W}) = D(\UU)$.

Claim 2. If $\mathcal{W}$ is $\mathbb{Z}$-Ramsey then $\mathcal{W}$ is a right cancellable element of the semigroup $\beta \mathbb{Z}$.

If not, by \cite[Theorem 8.18]{b4}, $\mathcal{W} = \UU + \mathcal{W}$ for some idempotent $\UU$. By Claim 1, $\UU$ is $\mathbb{Z}$-Ramsey which contradicts Corollary 2.

At last, suppose that some $\mathbb{Z}$-Ramsey ultrafilter $\mathcal{W}$ is represented as $\mathcal{W} = \UU + \V$. Applying Theorem 5 and Claim 1, we get $D(\mathcal{W}) = D(\UU) = D(\V)$ and 

$$D(\mathcal{W}) = (-\UU) + (-\V) + \UU + \V, \quad D(\V) = (-\V) + \V, \quad D(\UU) = (-\UU) + \UU.$$
By Claim 2, $(-\UU) + (-\V) + \UU = (-\V)$. It follows that $\mathbb{Z}^+ \in \UU$ if and only if $\mathbb{Z}^+ \notin \V$. On the other hand $(-\UU) + \UU = (-\V) + \V$. So, $\mathbb{Z}^+ \in \UU$ if and only if $\mathbb{Z}^+ \in \V$. Hence, $\mathcal{W}$ is prime.
\end{proof}

We do not know whether every $\mathbb{Z}$-Ramsey ultrafilter $\UU$ is strongly prime, i.e. $\UU$ does not lie in the closure of the set $\mathbb{Z}^* + \mathbb{Z}^*$. A free ultrafilter $\UU$ on a group $G$ is strongly prime if and only if some member of $\UU$ is sparse. A subset $S$ of an infinite group $G$ is called {\it sparse} \cite{b5} if, for every infinite subset $X$ of $G$, there exists a finite subset $F\subset X$ such that $\cap_{g\in F} gS$ is finite.

Following \cite{b6}, we say that a subset $A$ of a group $G$ is {\it $k$-thin}, $k\in \mathbb{N}$ if 
$$|gA\cap A|\leqslant k$$
for each $g\in G\setminus \{e\}$.

\begin{theorem}
Let $\UU$ be a $\mathbb{Z}$-Ramsey ultrafilter on $\mathbb{Z}$, $\mathbb{Z}^+ \in \UU$. If there exists a 1-thin subset $A$ of $G$ such that $A\in \UU$ then $\UU$ is selective.
\end{theorem}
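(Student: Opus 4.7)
The plan is to convert an arbitrary partition $\PP$ of $\mathbb{Z}$ into a $\mathbb{Z}$-invariant $2$-coloring of pairs and then apply the $\mathbb{Z}$-Ramsey property of $\UU$. The $1$-thin set $A\in\UU$ is exactly what makes this translation possible: since $|(z+A)\cap A|\leqslant 1$ for every $z\neq 0$, each nonzero integer $d$ is the difference $a'-a$ of \emph{at most one} ordered pair $a<a'$ in $A$. Thus differences of pairs from $A$ faithfully encode those pairs, which is what we need in order to read a non-invariant partition off from an invariant coloring.

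Given $\PP$, I would define a symmetric coloring $\chi':\mathbb{Z}\setminus\{0\}\to\{0,1\}$ as follows: for $d>0$, put $\chi'(d)=0$ iff the (unique, if it exists) pair $a,a'\in A$ with $a<a'$ and $a'-a=d$ lies in a single cell of $\PP$; otherwise $\chi'(d)=1$. Set $\chi'(-d)=\chi'(d)$. The $1$-thinness of $A$ guarantees $\chi'$ is well defined. Let $\chi:[\mathbb{Z}]^2\to\{0,1\}$ be the induced $\mathbb{Z}$-invariant coloring, $\chi(\{x,y\})=\chi'(x-y)$. Since $\UU$ is $\mathbb{Z}$-Ramsey and $A\in\UU$, I can pick $V\in\UU$ with $V\subseteq A$ on which $\chi$ is constant.

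Now a two-case analysis closes everything. If $\chi\equiv 0$ on $[V]^2$, then for any $v<v'$ in $V\subseteq A$ the pair $(v,v')$ is, by $1$-thinness of $A$, the unique pair in $A$ of difference $v'-v$; so the definition of $\chi'$ forces $v,v'$ into the same cell of $\PP$. Hence all of $V$ lies in one cell of $\PP$, and that cell is a member of $\UU$. If instead $\chi\equiv 1$ on $[V]^2$, then for any $v<v'$ in $V$ no pair in $A$ with difference $v'-v$ is $\PP$-monochromatic; but $(v,v')$ is itself such a pair, so $v,v'$ must lie in distinct cells, i.e.\ $V$ meets every cell of $\PP$ in at most one point.

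Since $\PP$ was arbitrary, this yields selectivity of $\UU$. I do not anticipate a serious obstacle: the only conceptual step is recognising that $1$-thinness is exactly the hypothesis that makes $\chi'$ well defined, after which the translation between an arbitrary partition and a $\mathbb{Z}$-invariant coloring, together with the $\mathbb{Z}$-Ramsey assumption, does the work.
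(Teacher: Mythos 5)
Your proposal is correct and uses exactly the paper's mechanism: $1$-thinness makes the difference map on $[A]^2$ injective, so a coloring of pairs from $A$ extends to a well-defined symmetric (hence $\mathbb{Z}$-invariant) coloring $\chi'$ of $\mathbb{Z}\setminus\{0\}$, to which the $\mathbb{Z}$-Ramsey property is applied on a member contained in $A$. The only cosmetic difference is that the paper encodes an arbitrary $2$-coloring $\varphi$ and concludes via the classical Ramsey characterization of selectivity, whereas you encode the partition-induced coloring and verify the selectivity definition directly.
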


\begin{proof}
We fix an arbitrary coloring $\varphi: [\mathbb{Z}]^2 \to \{0,1\}$ and define a symmetric coloring $\chi' : \mathbb{Z}\setminus \{0\}$ as follows. If $g\in \mathbb{Z}\setminus \{0\}$ and there are $a,b\in A$, $a<b$ such that $g=b-a$, we put $\chi'(g) = \chi'(-g) = \varphi(\{a,b\})$. Otherwise, $\chi'(g) = \chi'(-g) = 1$. This definition is correct because $A$ is 1-thin. Then we consider a coloring $\chi: [\mathbb{Z}]^2 \to \{0,1\}$ determined by $\chi'$. Since $\UU$ is $\mathbb{Z}$-Ramsey, there exists $U\in \UU$, $U\subseteq A$ such that $\chi|_{[U]^2} \equiv const$. By the construction of $\chi$, we have $\chi|_{[U]^2} \equiv \varphi|_{[U]^2}$. Thus, $\varphi|_{[U]^2} \equiv const$ and $\UU$ is selective.
\end{proof}

Recall that a free ultrafilter $\UU$ on $\mathbb{Z}$ is a {\it $Q$-point} if, for every partition $\PP$ of $\mathbb{Z}$ into finite cells there is a member of $\PP$ which meets each cell in at most one point.

\begin{corollary}
If a free ultrafilter $\UU$ on $\mathbb{Z}$ is $\mathbb{Z}$-Ramsey and a $Q$-point then $\UU$ is selective.
\end{corollary}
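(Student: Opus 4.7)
The plan is to reduce to Theorem 6: once I produce a $1$-thin member of $\UU$, the $\mathbb{Z}$-Ramsey hypothesis plus Theorem 6 immediately gives that $\UU$ is selective. Up to replacing $\UU$ by its reflection $-\UU = \{-U : U \in \UU\}$ (which preserves both $\mathbb{Z}$-Ramsey-ness and the $Q$-point property), I may assume $\mathbb{Z}^+ \in \UU$.

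First I use the $Q$-point hypothesis. Partition $\mathbb{Z}^+$ into consecutive finite intervals $I_n = [a_n, a_{n+1})$ with $a_n$ growing very rapidly. Then the $Q$-point property yields $U_0 \in \UU$ with $|U_0 \cap I_n| \leqslant 1$ for every $n$; enumerate $U_0 = \{u_n : n \in N\}$ with $u_n \in I_n$. To refine $U_0$ to a $1$-thin set in $\UU$ I would appeal to $\mathbb{Z}$-Ramsey. Define a symmetric coloring $\chi' : \mathbb{Z}\setminus\{0\} \to \{0,1\}$ by $\chi'(d) = 1$ iff the positive integer $|d|$ has a unique representation $|d| = u_j - u_i$ with $u_i < u_j$ in $U_0$, and let $\chi$ be the corresponding $\mathbb{Z}$-invariant coloring of $[\mathbb{Z}]^2$. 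Since $\UU$ is $\mathbb{Z}$-Ramsey, some $V \in \UU$ satisfies $\chi|_{[V]^2}\equiv c$ for some $c\in\{0,1\}$. The set $W := V \cap U_0$ lies in $\UU$, and in the favourable case $c = 1$ every positive difference realized inside $W$ is uniquely realized already in $U_0$, hence a fortiori in $W$; thus $W$ is $1$-thin and Theorem 6 concludes.

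The main obstacle will be to exclude the alternative $c = 0$, under which every positive difference realized in $V$ is multiply represented in $U_0$. I expect to handle this either by taking the growth of $a_n$ fast enough that an overwhelming proportion of differences in $U_0 - U_0$ must be uniquely realized (making $c = 0$ incompatible with $V \cap U_0 \in \UU$), or by iterating: replace $U_0$ by $V \cap U_0$, rebuild the uniqueness coloring, and reapply $\mathbb{Z}$-Ramsey until the $c = 1$ branch is forced. Once a $1$-thin member of $\UU$ is in hand, Theorem 6 finishes the proof that $\UU$ is selective.
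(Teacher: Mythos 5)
Your reduction to Theorem 6 is the right frame, and so is the instinct to use the $Q$-point property to select one point from each cell of a partition of $\mathbb{Z}^+$ into finite intervals. But the proof is incomplete at exactly the point that matters: you never actually produce a $1$-thin member of $\UU$, and you say so yourself. The colour $c=0$ (every difference realized in $V\cap U_0$ is multiply represented in $U_0$) is not excluded by anything you write, and neither of your proposed escapes works as stated: that ``an overwhelming proportion'' of the differences of $U_0$ are uniquely realized says nothing about which colour class $\UU$ selects, and the iteration has no argument that it terminates in the $c=1$ branch. Moreover, the underlying difficulty is real and is not cured by rapid growth of the $a_n$: since your intervals $I_n=[a_n,a_{n+1})$ are consecutive, the chosen points $u_n\in I_n$ and $u_{n+1}\in I_{n+1}$ can be adjacent integers, so small differences can repeat arbitrarily often in $U_0$ no matter how fast $a_n$ grows.

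The fix is simpler than the Ramsey detour you attempt and uses $\mathbb{Z}$-Ramseyness only through the final appeal to Theorem 6: leave gaps between the intervals. Write $\mathbb{Z}^+=Z_0\cup Z_1$ with $Z_0=\bigcup_{n\in\omega}[2^{2n},2^{2n+1})$ and $Z_1=\mathbb{Z}^+\setminus Z_0$, take $i$ with $Z_i\in\UU$, and apply the $Q$-point property to the partition of $\mathbb{Z}$ refining $Z_i$ into the dyadic blocks $[2^m,2^{m+1})$; intersecting with $Z_i$ gives $U\in\UU$, $U\subseteq Z_i$, meeting each block in at most one point. Because $Z_i$ contains only every other dyadic block, consecutive elements $u<u'$ of $U$ with $u\in[2^m,2^{m+1})$ satisfy $u'\geqslant 2^{m+2}>2u$; for such a lacunary set every positive difference $u_j-u_i$ ($i<j$) lies in the interval $(u_j/2,\,u_j)$, and these intervals are pairwise disjoint as $j$ varies, so all pairwise differences are distinct and $U$ is $1$-thin. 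Theorem 6 then finishes the proof. This is exactly the paper's argument, and it needs no auxiliary coloring.
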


\begin{proof}
To apply Theorem 6, it suffices to show that $\UU$ has a 1-thin member. We suppose that $\mathbb{Z}^+ \in \UU$, use the partition $\mathbb{Z}^+ = Z_0 \cup Z_1$ from Corollary 1, and take $i \in \{1,2\}$ and $U \in \UU$ such that $U$ meets each cell $[2^m, 2^{m+1})$ of $Z_i$ in at most one point. Clearly, $U$ is 1-thin.
\end{proof}

We do not know if each $P$-point in $\mathbb{Z}^*$ is $\mathbb{Z}$-Ramsey. Recall that $\UU$ is a $P$-point if, for every partition $\PP$ of $\mathbb{Z}$, either some cell of $\PP$ is a member of $\UU$, or there exists $U\in \UU$ such that $U\cap P$ is finite for each $P\in \PP$.

In the proof of the next corollary, we use the following observation: if $\UU$ is $(\mathbb{Z}, n)$-Ramsey and $m < n$, then $\UU$ is $(\mathbb{Z}, m)$-Ramsey.

\begin{corollary}
Each $(\mathbb{Z}, 4)$-Ramsey ultrafilter $\UU$ on $\mathbb{Z}$ is selective.
\end{corollary}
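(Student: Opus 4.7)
The plan is to produce a Sidon (equivalently, 1-thin) member of $\UU$ and then invoke Theorem 6. By the observation just before the corollary, $\UU$ is also $(\mathbb{Z},3)$-Ramsey. Replacing $\UU$ by $-\UU$ if necessary, we may assume $\mathbb{Z}^+\in\UU$. I will use the standard fact that a subset $A\subseteq\mathbb{Z}$ is Sidon iff it contains neither a 3-term arithmetic progression nor a 4-element subset $\{a<b<c<d\}$ with $a+d=b+c$ (the only ways a would-be equation $a_1+a_2=a_3+a_4$ in $A$ can have $\{a_1,a_2\}\neq\{a_3,a_4\}$ as multisets), and I will eliminate these two obstructions separately.

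For 3-APs, I would apply $(\mathbb{Z},3)$-Ramseyness to the $\mathbb{Z}$-invariant coloring $\chi_3:[\mathbb{Z}]^3\to\{0,1\}$ given by $\chi_3(\{a,b,c\})=0$ iff $2b=a+c$ (for $a<b<c$). The monochromatic-$0$ alternative fails at once: any four elements $v_1<v_2<v_3<v_4$ of such a $V$ would make $\{v_1,v_2,v_3\}$ and $\{v_1,v_2,v_4\}$ both APs, forcing $v_3=2v_2-v_1=v_4$. So some $V\in\UU$ has no 3-APs.

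For 4-element Sidon violations, apply $(\mathbb{Z},4)$-Ramseyness to the $\mathbb{Z}$-invariant coloring $\chi_4:[\mathbb{Z}]^4\to\{0,1\}$ with $\chi_4(\{a,b,c,d\})=0$ iff $a+d=b+c$ (for $a<b<c<d$). Again the monochromatic-$0$ alternative fails: five elements $u_1<\cdots<u_5$ of such a $U$ yield $u_1+u_4=u_2+u_3=u_1+u_5$ from the quadruples $\{u_1,u_2,u_3,u_4\}$ and $\{u_1,u_2,u_3,u_5\}$, so $u_4=u_5$, a contradiction. So some $U\in\UU$ has no 4-element Sidon violation.

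Then $W:=U\cap V\in\UU$ is free of both obstructions, hence Sidon and therefore 1-thin, and Theorem 6 delivers the selectivity of $\UU$. The main conceptual obstacle, and the reason the statement is about $n=4$ rather than $n=2$ or $n=3$, is that a single coloring on $[\mathbb{Z}]^4$ cannot cleanly separate Sidon-ness from "no 4-element violation" (3-APs slip through); the hypothesis $(\mathbb{Z},4)$-Ramsey is exactly strong enough to run the two-step argument above, since by the stated observation it subsumes $(\mathbb{Z},3)$-Ramsey and thereby kills both obstructions simultaneously.
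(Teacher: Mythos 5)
Your proof is correct and follows essentially the same route as the paper's: both obtain a 1-thin (Sidon) member of $\UU$ by applying $(\mathbb{Z},4)$- and $(\mathbb{Z},3)$-Ramseyness to the colorings detecting $a+d=b+c$ and $3$-term arithmetic progressions, respectively, and then invoke Theorem 6. The only differences are cosmetic (the order of the two steps, intersecting rather than nesting the monochromatic sets), and you additionally supply the small verifications the paper dismisses with ``clearly''.
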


\begin{proof}
Since $\UU$ is $(\mathbb{Z}, 2)$-Ramsey, to apply Theorem 6, it suffices to find a 1-thin member of $\UU$.

We define a coloring $\chi_1 : [\mathbb{Z}]^4 \to \{0,1\}$ by the rule: $\chi_1(F) = 0$ if and only if there is a numeration $F = \{x,y,z,t\}$ such that $x+y = z+t$. Since $\chi_1$ is $\mathbb{Z}$-invariant, there is $Y\in \UU$ such that $\chi_1|_{[Y]^4}\equiv i$. Since $A$ is infinite, $i=1$. 

Then we define a coloring $\chi_2: [\mathbb{Z}]^3 \to \{0,1\}$ by the rule $\chi_2(F) = 0$ if and only if $F$ is an arithmetic progression. Since $\chi_2$ is $\mathbb{Z}$-invariant and $\UU$ is $(\mathbb{Z},3)$-Ramsey, there is $Z\in \UU$, such that $Z\subset Y$ and $\chi_2|_{[Z]^3}\equiv i$. Clearly, $i=1$.

At last, $\chi_1|_{[Z]^4}\equiv 1$ and $\chi_2|_{[Z]^3}\equiv 1$ imply that $Z$ is 1-thin.
\end{proof}

A free ultrafilter $\UU$ on an Abelian group $G$ is said to be a $PS$-ultrafilter if, for any coloring $\chi: G\to \{0,1\}$ there exists $U\in \UU$ such that the set $PS(U)$ is $\chi$-monochrome, where $PS(U)=\{ a+b: a,b\in U, a \ne b \}$. Clearly, each selective ultrafilter on $G$ is a $PS$-ultrafilter. The following statements were proved in \cite{b6}, see also \cite[Chapter 10]{b2}. We denote by $PS(\UU)$ a filter with the base $\{PS(U): U\in \UU\}$. Then $\UU$ is a $PS$-ultrafilter if and only if $PS(\UU) = \UU + \UU$. If $G$ has no elements of order 2 then each $PS$-ultrafilter on $G$ is selective. A strongly summable ultrafilter on the countable Boolean group $B$ is a $PS$-ultrafilter but not selective. If there exists a $PS$-ultrafilter on some countable Abelian group then there is a $P$-point in $\omega^*$. It is easy to see that, an ultrafilter $\UU$ on a countable Boolean group $B$ is a $PS$-ultrafilter if and only if $\UU$ is $B$-Ramsey. Thus, a $B$-Ramsey ultrafilter needs not to be selective, but these ultrafilters cannot be constructed in ZFC with no additional assumptions.

Department of Cybernetics, Kyiv University, Volodimirska 64, Kyiv 01033, Ukraine

opetrenko72@gmail.com

I.V.Protasov@gmail.com


\begin{thebibliography}{99}
\bibitem{b1} W.W.~Comfort, {\it Ultrafilters: some old and some new results}, Bull. Amer. Math. Soc., {\bf 83}(1977), 417-455.
\bibitem{b2} M.~Filali, I.~Protasov, {\it Ultrafilters and Topologies on Groups}, Math. Stud. Monogr. Ser., Vol. 13, VNTL Publishers, Lviv, 2011.
\bibitem{b3} L.~Fuchs, {\it Infinite Abelian Groups}, Vol. {\bf 1}, Academic Press, New York, London, 1970.
\bibitem{b4} N.~Hindman, D.~Strauss, {\it Algebra in the
Stone-$\check{C}$ech compactification: Theory and Applications,}
Walter de Gruyter, Berlin, New York, 1998.
\bibitem{b5} Ie.~Lutsenko, I.V.~Protasov, {\it Sparse, thin and other subsets of groups}, Intern. J. Algebra Computation, {\bf 19}(2009), 491--510.
\bibitem{b6} I.V.~Protasov, {\it Ultrafilters and partitions of Abelian groups}, Ukr. Math. J., {\bf 53}(2001), 99--107.
\end{thebibliography}
\end{document}